\documentclass{article}
\usepackage{fullpage}
\usepackage{amsmath,amssymb,amsthm,graphicx,xypic,url}
\usepackage[vcentermath]{youngtab}
\usepackage{tikz}

\theoremstyle{definition}
\newtheorem{theorem}{Theorem}
\newtheorem{lemma}[theorem]{Lemma}

\newtheorem{prop}[theorem]{Proposition}
\newtheorem{example}[theorem]{Example}
\newtheorem{fact}[theorem]{Fact}

\newcommand{\wds}{[n]^*}
\newcommand{\wdsk}[1]{[n]^{#1}}

\newcommand{\defeq}{:=}
\newcommand{\ssyt}{\textsc{ssyt}}
\newcommand{\syt}{\textsc{syt}}
\newcommand{\scs}[1]{\textsc{scs}_{#1}}
\newcommand{\scssub}[2]{\textsc{scs}_{#1}(#2)}
\DeclareMathOperator{\shape}{sh}
\DeclareMathOperator{\rw}{rw}

\begin{document}

\author{
Andrew Crites \thanks{
Dept. of Mathematics,
University of Washington,
Seattle, WA 98195,
\texttt{acrites@uw.edu}}
\and
Greta Panova\thanks{
Dept. of Mathematics,
University of California, Los Angeles
Los Angeles, CA 90095
\texttt{panova@math.ucla.edu}}
\and
Gregory S. Warrington \thanks{
Dept. of Mathematics and Statistics,
University of Vermont,
Burlington, VT 05401,
\texttt{gwarring@cems.uvm.edu}.  Supported in part by National Security Agency grant H98230-09-1-0023.}}

\title{Shape and pattern containment of separable permutations}

\maketitle

\begin{abstract}
  Every word has a \emph{shape} determined by its image under the
  Robinson-Schensted-Knuth correspondence.  We show that when a word
  $w$ contains a separable (i.e., 3142- and 2413-avoiding) permutation
  $\sigma$ as a pattern, the shape of $w$ contains the shape of
  $\sigma$.  As an application, we exhibit lower bounds for the
  lengths of supersequences of sets containing separable permutations.



\end{abstract}

The Robinson-Schensted-Knuth (RSK) correspondence associates to any
word $w$ a pair of \emph{Young tableaux}, each of equal partition
shape $\lambda = (\lambda_1,\lambda_2,\ldots)$.  We say that $w$ has
\emph{shape} $\shape(w) = \lambda$.  It is natual to expect that if
$\sigma$ is a subsequence of $w$, then $\shape(\sigma) \subseteq
\shape(w)$.  However, this is not necessarily the case: If $\sigma =
2413$ and $w = 24213$, then
\begin{equation}
  (P(\sigma),Q(\sigma)) =
  \left(\young(13,24)\,,\young(12,34)\right)\quad \text{ and }\quad
  (P(w),Q(w)) = \left(\young(123,2,4),\young(125,3,4)\right).
\end{equation}
We see that $\shape(w) = (3,1,1) \not\supseteq (2,2) =
\shape(\sigma)$.  The main theorem of this paper is that the inclusion
does hold when $\sigma$ is a \emph{separable} permutation.
Furthermore, $\sigma$ need only be contained as a pattern rather than
as an actual subsequence.


\begin{theorem}\label{thm:cont}
  If a word $w$ contains a separable permutation $\sigma$ as a
  pattern, then $\shape(w) \supseteq \shape(\sigma)$.
\end{theorem}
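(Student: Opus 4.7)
The plan is to induct on $|\sigma|$, exploiting the recursive structure of separable permutations: a separable $\sigma$ of length at least $2$ can be written as $\alpha \oplus \beta$ (direct sum) or $\alpha \ominus \beta$ (skew sum) for smaller separable $\alpha, \beta$. The base case $|\sigma|=1$ is immediate, since any nonempty word has shape containing $(1)$. Complementation $\sigma \mapsto \sigma^c$ (defined by $\sigma^c_i = n+1-\sigma_i$) interchanges $\oplus$ with $\ominus$, transposes the RSK shape, and transports
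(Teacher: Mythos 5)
Your proposal breaks off before the inductive step, and that step is precisely where the difficulty lies. Suppose $\sigma=\alpha\oplus\beta$ and $w$ contains $\sigma$, hence contains $\alpha$ and $\beta$. The inductive hypothesis gives you $\shape(w)\supseteq\shape(\alpha)$ and $\shape(w)\supseteq\shape(\beta)$, i.e.\ $\shape(w)\supseteq\shape(\alpha)\cup\shape(\beta)$. But what you must prove is $\shape(w)\supseteq\shape(\alpha\oplus\beta)=\shape(\alpha)+\shape(\beta)$ (componentwise sum of the two partitions, by Greene's theorem), which is far larger: for $\alpha=\beta=21$ the hypothesis yields only $(1,1)$ while the target is $(2,2)$. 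So the statement of the theorem is not strong enough to carry its own induction on the block decomposition, and you have not indicated what strengthened statement you would induct on. This is not a cosmetic issue: the paper's counterexample $\sigma=2413\subseteq w=24213$ shows that even knowing $w$ contains disjoint increasing subsequences of lengths $\lambda_1,\lambda_2,\ldots$ does not force $\shape(w)\supseteq\lambda$, so transporting a nice family of subsequences from $\sigma$ into $w$ (which \emph{is} easy to produce by your induction --- this is the paper's Proposition~\ref{prop:sep}) does not finish the argument. The authors explicitly remark that the recursive $\oplus/\ominus$ definition gives a one-line proof of Proposition~\ref{prop:sep} but that they could not find a correspondingly direct proof of Theorem~\ref{thm:cont}.

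What the paper actually does is quite different: starting from a Greene-optimal family $w^1,\ldots,w^k$ in $w$ realizing $\mu_1+\cdots+\mu_k$, it restricts to the occurrence of $\sigma$ and invokes an \emph{extension} property (Proposition~\ref{prop:ext}): given \emph{any} $k$ disjoint increasing subsequences of a separable $\sigma$, there is a further increasing subsequence, disjoint from all of them, of length at least $\lambda_{k+1}$. Pushing that subsequence back into $w$ and applying Greene's theorem again gives $\mu_{k+1}\geq\lambda_{k+1}$. The extension property is proved via a swapping lemma (Lemma~\ref{lem:N}) that uses the characterization of separability by a forbidden four-element subposet of the inversion poset. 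If you want to salvage your plan, the right move is to prove Proposition~\ref{prop:ext} itself by induction on the $\oplus/\ominus$ decomposition (the direct-sum case concatenates the two extensions; the skew-sum case uses that the $(k+1)$-st largest part of the merged partition is attained in one of the two blocks), and then you still need Greene's theorem to convert that into a statement about $\shape(w)$. Two further details you would need to address: complementation of a \emph{word} with repeated letters does not transpose its RSK shape, so you should first reduce to the case that $w$ is a permutation (e.g.\ by standardization, which preserves the shape and the pattern occurrence); and the base case alone establishes nothing about how the two blocks interact inside $w$.
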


Our discovery of Theorem~\ref{thm:cont} was motivated by an
application involving lower bounds for \emph{shortest containing
  supersequences}.  Such supersequences arise in
bioinformatics~\cite{Ning, Rahmann} through the design of DNA
microarrays, in planning~\cite{Foulser} and in data
compression~\cite{Rodeh}.  This application to supersequences is
described in Section~\ref{sec:super}.  Section~\ref{sec:notation}
introduces the requisite notation required for the proof of
Theorem~\ref{thm:cont} appearing in Section~\ref{sec:proof}.
Section~\ref{sec:gt} discusses the relationship between Greene's
Theorem, separable permutations, and the contents of this paper.

\section{Background and setup}
\label{sec:notation}

Let $\wds$ denote the set of finite-length words on the alphabet $[n]
\defeq \{1,2,\ldots,n\}$ and let $\wdsk{a}$ denote the subset of
length-$a$ words.  The set of permutations of length $n$ is denoted by
$S_n$ (here a subset of $\wdsk{n}$).  Permutations will be denoted by
Greek letters and written in one-line notation.  For example, the
permutation $\tau\in S_3$ defined by $\tau(1) = 3$, $\tau(2) =
1$ and $\tau(3) = 2$ is written $312$.  When referring to a
subsequence of a permutation $\tau$ we make no distinction between
the actual subsequence and the corresponding subset of elements; the
subsequence can be reconstructed by the positions in $\tau$.  The
length of a word $u$ is denoted $|u|$.

Given a word $w \in \wdsk{a}$ and a permutation $\pi\in S_m$, $m\leq
a$, we say that $w$ \emph{contains the pattern} $\pi$ if there
exist indices $1\leq i_1 < i_2 < \cdots < i_m\leq a$ such that, for
all $1\leq j,k\leq m$, $w(i_j) < w(i_k)$ if and only if $\pi(j) <
\pi(k)$ and $w(i_j) > w(i_k)$ if and only if $\pi(j) > \pi(k)$.  If
$w$ does not contain the pattern $\pi$, then we say $w$
\emph{avoids} $\pi$.  

We have defined pattern containment for words $w$ as that is the
generality in which we state Theorem~\ref{thm:cont}.  For pattern
avoidance, however, we will only need the case in which $w$ is itself
a permutation.  In particular, central to this paper will be
permutations that simultaneously avoid the patterns $3142$ and $2413$.
Being $3142,2413$-avoiding is one characterization of the class of
\emph{separable} permutations (see~\cite{seppaper}).  Throughout this
paper $\sigma$ will denote a separable permutation with
$\shape(\sigma) = \lambda = (\lambda_1,\lambda_2,\ldots)$.

Given a permutation $\pi\in S_n$, let $P(\pi)$ denote its
\emph{inversion poset}.  $P(\pi)$ has elements $(i,\pi(i))$ for $1\leq
i\leq n$ under the partial order $\prec$, in which $(a,b) \prec (c,d)$ if
and only if $a < c$ and $b < d$.  Note that increasing subsequences in
$\pi$ correspond to chains in $P(\pi)$. A longest increasing
subsequence of $\pi$ corresponds to a maximal chain in $P(\pi)$.

\begin{example}\label{ex:24133412}
The inversion poset of $2413$ is
$\displaystyle
 \begin{xy}
*!C\xybox{
\xymatrix@=2em{
(2,4) & (4,3) \\
(1,2) \ar@{-}[u] \ar@{-}[ur]& (3,1)  \ar@{-}[u]  } }
\end{xy}$
and that of $3142$ is 
$\displaystyle 
 \begin{xy}
*!C\xybox{
\xymatrix@=2em{
(3,4) & (4,2) \\
(1,3) \ar@{-}[u] & (2,1)  \ar@{-}[u]\ar@{-}[ul]  } }
\end{xy}$.
\end{example}

Example~\ref{ex:24133412} above immediately gives the following fact.

\begin{fact}\label{fact:sep}
  A permutation $\pi$ is separable if and only if its
  inversion poset $P(\pi)$ has no (induced) subposet isomorphic to
  $\displaystyle
  \begin{xy}
    *!C\xybox{
      \xymatrix@=1em{
        {*} & {*} \\
        {*} \ar@{-}[u] \ar@{-}[ur]& {*} \ar@{-}[u]  } }
  \end{xy}$.
\end{fact}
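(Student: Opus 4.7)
My plan is to use the observation that the induced subposet of $P(\pi)$ on any four elements $(i_1,\pi(i_1)),\ldots,(i_4,\pi(i_4))$ with $i_1<\cdots<i_4$ depends only on the relative order of the positions and the values. Hence such an induced subposet is isomorphic to $P(\tau)$, where $\tau\in S_4$ is the $4$-pattern formed by those positions in $\pi$. In other words, the induced subposets of $P(\pi)$ on four elements are in bijection with the $4$-patterns occurring in $\pi$.

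For the only-if direction, I would suppose $\pi$ is not separable, so that $\pi$ contains $3142$ or $2413$ as a pattern. By the correspondence above and Example~\ref{ex:24133412}, the four elements of $P(\pi)$ witnessing that pattern form an induced subposet isomorphic to the N-shape displayed in the fact.

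For the if direction, I would suppose $P(\pi)$ contains the N-shape as an induced subposet, take the four underlying positions to obtain a $4$-pattern $\tau\in S_4$ with $P(\tau)$ isomorphic to the N-shape, and then verify that $\tau\in\{2413,3142\}$. This is a finite check over the $24$ elements of $S_4$, and it can be streamlined: the N-shape has two minima, two maxima, no isolated vertex, and one minimum covered by both maxima while the other is covered by only one. These constraints force $\tau$ to have exactly two left-to-right minima and two right-to-left maxima at disjoint positions, and inspection of the short list of candidates eliminates everything except $2413$ and $3142$ (for example, $2143$ yields the poset where both minima sit below both maxima, and $3412$ yields a disjoint union of two chains).

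The main ``obstacle,'' such as it is, lies entirely in the reverse direction and is just the finite verification that the only $4$-patterns whose inversion poset is the N-shape are $2413$ and $3142$. Because the combinatorial constraints imposed by the N-shape are so restrictive, this reduces to inspecting a handful of permutations, which is presumably why the paper advertises the fact as ``immediate'' from Example~\ref{ex:24133412}.
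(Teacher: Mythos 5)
Your proposal is correct and follows essentially the same route as the paper, which offers no explicit proof beyond declaring the fact immediate from Example~\ref{ex:24133412}: you supply precisely the details that declaration relies on, namely that a $4$-element induced subposet of $P(\pi)$ is order-isomorphic to $P(\tau)$ for the corresponding $4$-pattern $\tau$, and that a finite check (equivalently, among the six elements of $S_4$ with three inversions) shows $2413$ and $3142$ are the only patterns whose inversion poset is the N-shape.
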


We write our partitions with parts in decreasing order and make
no distinction between the positive and zero parts.  Given a partition
$\mu = (\mu_1, \mu_2, \ldots)$ of $n$ (denoted $\mu \vdash n$), the
associated \emph{Ferrers diagram} consists of $\mu_i$ left-justified
cells in the $i$-th row from the top.  A \emph{semistandard Young tableau
  of shape $\mu$} is a filling of the cells in this diagram with
positive integers such that the rows weakly increase from left to
right and the columns strictly increase from top to bottom.  The set
of such tableaux is denoted by $\ssyt(\mu)$.  A tableau
$T\in\ssyt(\mu)$, $\mu \vdash n$, is \emph{standard} if each number
from $1$ to $n$ appears in its filling.  The set of all such tableaux is
denoted by $\syt(\mu)$.  Given a semistandard tableau $T$, the
\emph{reading word} of $T$, $\rw(T)$, is the word obtained by reading
off the rows from left to right starting with the bottom row.  For
$\mu \vdash n$, define the \emph{superstandard tableau}
$T\in\syt(\mu)$ by filling in the rows from top to bottom.  That is,
by placing $1,2,\ldots,\mu_1$ in the first row,
$\mu_1+1,\mu_1+2,\ldots,\mu_1+\mu_2$ in the second row, etc.

The RSK correspondence yields a bijection between $\wdsk{a}$ and
$\cup_{\mu \vdash a} \ssyt(\mu)\times \syt(\mu)$~\cite{knuth}.  We
give a brief description of how to compute the pair $(P(w),Q(w))$ to
which a word $w\in\wdsk{a}$ corresponds.  Write $w=w'x$ with
$w'\in\wdsk{a-1}$.  By induction, we know that $w'$ maps to some pair
$(P(w'),Q(w'))$.  We \emph{row insert} $x$ in the first row of $P(w')$
as follows: If $x=x_1$ is greater than or equal to all elements in
this row, place $x_1$ at the end of the row.  Otherwise, find the
leftmost entry, $x_2$, in the row that is strictly greater than $x_1$.
Place $x_1$ in this position and ``bump'' $x_2$ to be inserted into
the next row.  This process generates some finite sequence
$x_1,\ldots,x_k$ of bumped elements and ends by adding $x_k$ at the
end of the $k$-th row, creating a new semistandard tableaux $P(w)$.
Set $Q(w)$ to have an $a$ in the new box (end of row $k$) created by
the bumping process.  The \emph{shape} of $w$, $\shape(w)$, is the
shape of $P(w)$ (or, equivalently, of $Q(w)$).

\begin{example}
  The permutation $\pi=7135264$ contains the pattern $4231$ but avoids
  $3412$.  Under the RSK correspondence, $w=2214312$ maps to
  $\left(P(w),Q(w)\right) = \left(
  \young(112,223,4)\,,\young(124,357,6)\right)$ with $\rw(P(w)) =
  4223112$.  Finally, the superstandard tableau of shape $(3,3,2)$ is
  $\young(123,456,78)$.
\end{example}

\section{Proof of Theorem~\ref{thm:cont}}
\label{sec:proof}

Many properties of a word $w$ translate to natural properties of the
associated tableaux.  For example, the length of the longest weakly
increasing subsequence of $w$ equals $\lambda_1$.  In fact, Greene's
Theorem~\cite{greene} gives a much more precise correspondence.

\begin{theorem}[Greene's Theorem]\label{thm:greene}
  Let $w$ be a word of shape $\lambda$.  For any $d\geq 0$ the sum
  $\lambda_1 + \cdots + \lambda_d$ equals the maximum number of
  elements in a disjoint union of $d$ weakly increasing subsequences
  of $w$.
\end{theorem}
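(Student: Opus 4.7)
My plan is to prove Greene's theorem by induction on $d$, with the base case $d = 1$ being Schensted's theorem. Schensted's theorem states that $\lambda_1$ equals the length of the longest weakly increasing subsequence of $w$; I would prove it by induction on $|w|$, observing that when a new letter $x$ is row-inserted into $P$, the length of row one grows by exactly one cell if and only if $x$ is weakly greater than every current entry of row one, which holds precisely when $x$ extends a longest weakly increasing subsequence of the prefix read so far. Thus both quantities track each other throughout the insertion.

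For the inductive step $d \geq 2$, I would select a canonical maximum weakly increasing subsequence $L$ of $w$---for instance Viennot's first shadow line, or equivalently the chain identified by Fomin's local growth rules---and establish the chain-removal lemma: $\shape(w \setminus L) = (\lambda_2, \lambda_3, \ldots)$, so that removing $L$ peels off exactly the first row of $\lambda$. Applying the inductive hypothesis to $w \setminus L$ produces $d - 1$ disjoint weakly increasing subsequences of total size $\lambda_2 + \cdots + \lambda_d$; adjoining $L$ yields $d$ disjoint weakly increasing subsequences in $w$ of total $\lambda_1 + \cdots + \lambda_d$, giving the lower bound. For the upper bound, given any $d$ disjoint weakly increasing subsequences of $w$, an exchange argument modifies them to include $L$ without decreasing total size, after which the remaining $d - 1$ subsequences lie in $w \setminus L$ and the inductive hypothesis bounds their total by $\lambda_2 + \cdots + \lambda_d$.

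The main obstacle will be the chain-removal lemma. The choice of $L$ is crucial: deleting the ``wrong'' maximum weakly increasing subsequence can leave a word of incomparable shape. For instance, for $w = 2413$ (with $\lambda = (2,2)$), deleting the subsequence at positions $1$ and $4$ leaves the word $41$ of shape $(1,1)$, rather than the shape $(2)$ required by $(\lambda_2, \lambda_3, \ldots)$. Verifying that Viennot's shadow line (or Fomin's growth-rule chain) always has the required removal property requires a careful RSK-theoretic analysis; Fomin's local growth rules, which encode the RSK correspondence via elementary local moves on a two-dimensional lattice, provide perhaps the cleanest framework for this verification.
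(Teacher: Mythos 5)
The paper does not actually prove this statement: Greene's Theorem is quoted as background and attributed to~\cite{greene}, so there is no in-paper argument to compare yours against. Judged on its own merits, your base case (Schensted's theorem via the invariant that row one of $P$ tracks longest weakly increasing subsequences) is fine in outline, but the inductive step rests on a chain-removal lemma that is simply false, and no choice of ``canonical'' $L$ can repair it.

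Concretely, take $w = 236145$, which is Example~\ref{ex:nonsep} of this very paper: $\shape(w) = (4,2)$, and the \emph{unique} weakly increasing subsequence of length $\lambda_1 = 4$ is $2345$. Removing it leaves $61$, of shape $(1,1) \neq (2) = (\lambda_2)$. So here \emph{every} maximum weakly increasing subsequence is the ``wrong'' one; the failure you observed for $2413$ is not an artifact of a bad choice but can be unavoidable, so no shadow-line or growth-rule selection of $L$ can satisfy $\shape(w\setminus L)=(\lambda_2,\lambda_3,\ldots)$. The upper-bound half of your inductive step breaks at the same point: the optimal pair for $d=2$ is $\{236,\,145\}$ with total $6$, and no exchange argument can modify it to contain $2345$, since any pair of disjoint increasing subsequences containing $2345$ has total at most $4+1=5$. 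This is precisely the subtlety the surrounding paper emphasizes in Section~\ref{sec:gt}: Greene's Theorem controls only the maximum \emph{total} length of $d$ disjoint increasing subsequences, and that total cannot in general be realized by greedily peeling off one longest subsequence at a time. A correct proof must treat all $d$ subsequences simultaneously; the standard routes are Greene's original argument (show that the statistic ``maximum total length of $d$ disjoint weakly increasing subsequences'' is invariant under the Knuth relations, then verify the theorem directly for reading words of tableaux, where the $d$ longest rows themselves are optimal) or an equivalent jeu-de-taquin/growth-diagram argument. Fomin's local rules are a reasonable tool, but they would be used to establish Knuth invariance of the whole family of statistics, not to isolate a single removable chain.
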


In order to prove Theorem~\ref{thm:cont}, we will combine the insight
afforded by Greene's Theorem with the ability to exchange collections
of disjoint increasing subsequences with other collections for which
the number of intersections has, in a certain sense, been reduced.
Lemma~\ref{lem:N}, which is the only place separability explicitly
appears in our proof, allows us to perform these exchanges.


\begin{lemma}\label{lem:N}
  Let $u$, $w$, and $w'$ be increasing subsequences of a
  separable permutation $\sigma$.  Assume further that $w$ and $w'$
  are disjoint.  Then there exist two disjoint increasing subsequences
  $\alpha$ and $\beta$, such that $\alpha \cup \beta = w \cup
  w'$ and $\alpha \cap u = \emptyset$.
\end{lemma}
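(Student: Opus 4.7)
The plan is to induct on $|\sigma|$, exploiting the recursive structure of separable permutations: for $|\sigma|\geq 2$, $\sigma$ can be written as either a direct sum $\sigma_1\oplus\sigma_2$ or a skew sum $\sigma_1\ominus\sigma_2$, where each $\sigma_i$ is a shorter separable permutation. In the inversion poset, a direct sum makes $P(\sigma)$ the ordinal sum of $P(\sigma_1)$ and $P(\sigma_2)$ (every element of the former lies below every element of the latter), while a skew sum makes $P(\sigma)$ the disjoint union of the two pieces, with all cross-pairs incomparable. The base case $|\sigma|=1$ is immediate.

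In the direct-sum case, every chain in $P(\sigma)$ decomposes uniquely as a chain in $P(\sigma_1)$ concatenated with a chain in $P(\sigma_2)$, so I write $u=u_1\sqcup u_2$, $w=w_1\sqcup w_2$, and $w'=w'_1\sqcup w'_2$ accordingly. In each $P(\sigma_i)$ the hypotheses of the lemma persist: $w_i$ and $w'_i$ are disjoint chains and $u_i$ is a chain. The inductive hypothesis then yields $\alpha_i,\beta_i$ with $\alpha_i\sqcup\beta_i=w_i\cup w'_i$ and $\alpha_i\cap u_i=\emptyset$. Setting $\alpha=\alpha_1\sqcup\alpha_2$ and $\beta=\beta_1\sqcup\beta_2$ produces two chains in $P(\sigma)$, and the required identities $\alpha\sqcup\beta=w\cup w'$ and $\alpha\cap u=\emptyset$ follow termwise, with disjointness across the two pieces automatic.

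The skew-sum case is simpler: since $P(\sigma_1)$ and $P(\sigma_2)$ are mutually incomparable, each of $u$, $w$, and $w'$ lies entirely inside one of the two pieces. If $w$ and $w'$ both lie in the same $P(\sigma_i)$, then either $u$ does too and I invoke the inductive hypothesis on $\sigma_i$, or $u$ lies in the other piece so that the trivial choice $\alpha=w,\beta=w'$ works. If $w$ and $w'$ lie in different pieces then $u$ meets at most one of them, and I assign that one to $\beta$ and the other to $\alpha$. The main conceptual point is that Fact~\ref{fact:sep} allows separability to be leveraged through this recursive decomposition rather than through any delicate direct chain-exchange argument inside $P(\sigma)$; once the recursion is set up, the remaining work is routine case analysis with no substantive obstacle.
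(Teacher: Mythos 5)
Your proof is correct, but it takes a genuinely different route from the paper's. The paper works inside the inversion poset of the full permutation $\sigma$: after reducing to the case $u\subseteq w\cup w'$, it gives a short contradiction argument (take a maximal chain $\gamma\subseteq w\cup w'$ containing $u$; if the complement is not a chain, two incomparable leftover points together with two points of $\gamma$ assemble into the forbidden subposet of Fact~\ref{fact:sep}), and then a second, constructive proof that builds $\beta$ from the left-to-right maxima of $w\cup w'$ in the windows determined by consecutive elements of $u$, checking that any failure of $\alpha=(w\cup w')\setminus\beta$ to be increasing would force a $321$, $3142$, or $2413$ pattern. You instead induct on $|\sigma|$ via the recursive block characterization of separable permutations; the ordinal-sum structure of $P(\sigma_1\oplus\sigma_2)$ and the incomparable-union structure of $P(\sigma_1\ominus\sigma_2)$ make the inductive step routine, and your case analysis is complete. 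Your argument is shorter and is very much in the spirit of the remark at the end of Section~\ref{sec:gt}, where the authors observe that the recursive definition gives a quick proof of Proposition~\ref{prop:sep}; in effect you show the same is true of Lemma~\ref{lem:N}. One small correction of attribution: the ingredient you actually use is the equivalence of $\{3142,2413\}$-avoidance with the recursive direct/skew-sum definition (a standard fact from~\cite{seppaper}), not Fact~\ref{fact:sep} itself, so you should cite that equivalence rather than the $N$-free poset characterization. What the paper's constructive proof buys in exchange for its length is an explicit in-place algorithm for producing $\alpha$ and $\beta$ within a fixed $\sigma$ (used in the worked examples); your recursion is also effective but requires first computing a separating decomposition of $\sigma$.
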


\begin{proof}
  Write $u = u_0 \cup u_1$ with $u_0\cap (w \cup w') = \emptyset$ and
  $u_1 \subset w \cup w'$.  Since $\alpha \subset w \cup w'$,
  the requirement that $\alpha$ and $u_1$ be disjoint ensures that
  $\alpha$ and $u$ are disjoint as well.  Hence, without loss of
  generality, we may restrict our attention in the proof to the case
  in which $u\subset w \cup w'$.

  Let $x = w\cup w'$ be the subsequence of $\sigma$ containing both
  $w$ and $w'$.

  We give first a short proof by contradiction.  Consider the
  inversion poset $P(\sigma)$ of the separable permutation $\sigma$.
  Increasing subsequences are in correspondence with chains and we
  will regard them as such.  Assume there is no chain $\beta \subset
  (w \cup w')$, such that $u \subset \beta$ and $(w\cup w') \setminus
  \beta$ is also a chain.  Let $\gamma \subset (w\cup w')$ be some
  maximal chain, such that $u \subset \gamma$.  Then there exist two
  incomparable points $x,y \in (w \cup w') \setminus \gamma$, i.e. $x
  \not \gtrless y$ and thus belonging to the two different chains,
  e.g. $x \in w$, $y\in w'$.  By maximality, $x\cup \gamma$, $y \cup
  \gamma$ are not chains. Hence there exist $a,b \in \gamma$ for which
  $x \not \gtrless a$, $y \not \gtrless b$, so we must have $a \in w'$
  and $b \in w$. Assume $a \succ b$, then we must have $x \succ b$ and
  $y \prec a$.  We have $\displaystyle
     \begin{xy} 
     *!C\xybox{
       \xymatrix@=1em{
        {x} & {a} \\
        {b} \ar@{-}[u] \ar@{-}[ur]& {y} \ar@{-}[u]} }
  \end{xy}$ with $x\not \gtrless a, x \not \gtrless y, y\not \gtrless
  b$. This is a subposet of $P(\sigma)$ isomorphic to $\displaystyle
  \begin{xy}
    *!C\xybox{
      \xymatrix@=1em{
        {*} & {*} \\
        {*} \ar@{-}[u] \ar@{-}[ur]& {*} \ar@{-}[u]  } }
  \end{xy}$, contradicting Fact~\ref{fact:sep}.
   
  We now give a constructive proof, which allows us to find $\alpha$
  and $\beta$.  Let $z=w\cup w'$ be the subsequence of $\sigma$
  containing both $w$ and $w'$.  We can assume that $u$ is a
  subsequence of $z$.  First note that, since $z$ is a shuffle of two
  increasing disjoint words, $z$ also avoids the pattern $321$.  Also,
  since $u$ is a subsequence of $z$, there exist indices $i_1 < i_1 <
  \cdots < i_\ell$ such that $u_j = z_{i_j}$ for each $1\leq j\leq
  \ell$.  It will be convenient to augment our sequences by prepending
  a $u_0 = z_0 < \min\{z_i\}_{1\leq i \leq n}$ and appending a
  $u_{\ell+1} = z_{n+1} > \max\{z_i\}_{1\leq i\leq n}$.

  For each $1\leq j\leq \ell$, let $\beta^j$ be the sequence of
  left-to-right maxima from $z_{i_j}\cdots z_{i_{j+1}-1}$ whose
  values are greater than or equal to $u_j$ and less than $u_{j+1}$.
  Define $\beta^0$ analogously except with values \emph{greater than}
  $u_0$ and less than $u_1$.  Then, $\beta = \beta^0\cdots \beta^\ell$
  is, by construction, an increasing subsequence of $z$.  (Note that
  $\beta$ does not include $u_0$ or $u_{\ell+1}$.)

  We now need to show that $\alpha = z\setminus \beta$ is increasing.
  Suppose not.  Then there exists some $j$ such that $\alpha_j = z_a >
  \alpha_{j+1}$.  Let $m$ be the unique value such that $i_m < a
  < i_{m+1}$.  We now split into cases in order to argue that $z$ must
  contain one of the three patterns $321$, $3142$ or $2413$.

  \begin{enumerate}
  \item Suppose $\alpha_j > u_{m+1}$.  This implies $m<\ell$ (and
    hence that $u_{m+1}$ is an element of $z$).  We argue according to
    the region in which the point $\alpha_{j+1}$ lies (see Figure~\ref{fig:grid}).

  \begin{figure}[htbp]
    \centering
        {\scalebox{.4}{\includegraphics{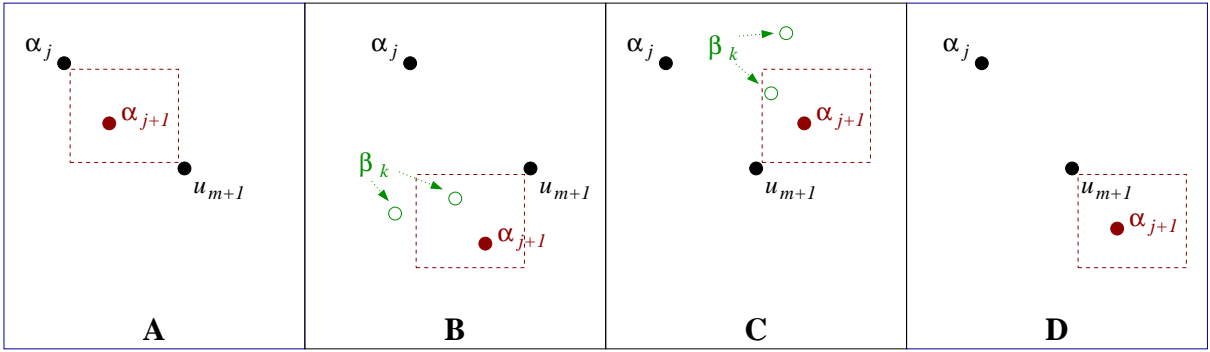}}}
        \caption{The cases in the proof of
          Lemma~\ref{lem:N} when $\alpha_j > u_{m+1}$.  Points are
          labeled by their $y$-values.}
        \label{fig:grid}
  \end{figure}

    \begin{enumerate}
    \item[A)] Then $\alpha_j\alpha_{j+1}u_{m+1}$ forms a $321$ pattern.
    \item[B)] Since $\alpha_{j+1}$ is not a left-to-right maximum,
      there must be some element $\beta_k$ lying to the northwest of
      $\alpha_{j+1}$ yet below $u_{m+1}$.  If $\beta_k$ lies to the
      left of $\alpha_j$, then $\beta_k\alpha_j\alpha_{j+1}u_{m+1}$
      forms a $2413$ pattern.  Otherwise,
      $\alpha_j\beta_k\alpha_{j+1}$ forms a $321$ pattern.
    \item[C)] As above, there must be some element $\beta_k$ lying to
      the northwest of $\alpha_{j+1}$ yet to the right of $u_{m+1}$.
      If $\beta_k$ lies above $\alpha_j$, then
      $\alpha_ju_{m+1}\beta_k\alpha_{j+1}$ forms a $3142$ pattern.
      Otherwise, $\alpha_j\beta_k\alpha_{j+1}$ forms a $321$ pattern.
    \item[D)] Then $\alpha_ju_{m+1}\alpha_{j+1}$ forms a $321$ pattern.
    \end{enumerate}

  \item Suppose $\alpha_j < u_{m+1}$.  Since $\alpha_j$ is not a left-to-right maximum, there must be some element
    $\beta_k$ (possibly $u_m$) lying to the northwest of $\alpha_j$.  Hence
    $\beta_k\alpha_j\alpha_{j+1}$ forms a $321$ pattern.
  \end{enumerate}
\end{proof}

\begin{example}
  Figure~\ref{fig:twoseqs} illustrates the sequences $\alpha$ and
  $\beta$ that arise from the construction of Lemma~\ref{lem:N}.  The
  two original sequences shuffled together are connected by dotted
  lines.  The elements of $u$ are illustrated by open circles.  The
  boxes indicate the regions in which the elements of $\beta$ (other
  than those of $u$ itself) are required to lie.  Finally, the
  sequence $\beta$ is connected by the thick, dashed line.
  \begin{figure}[htbp]
    \centering
        {\scalebox{.4}{\includegraphics{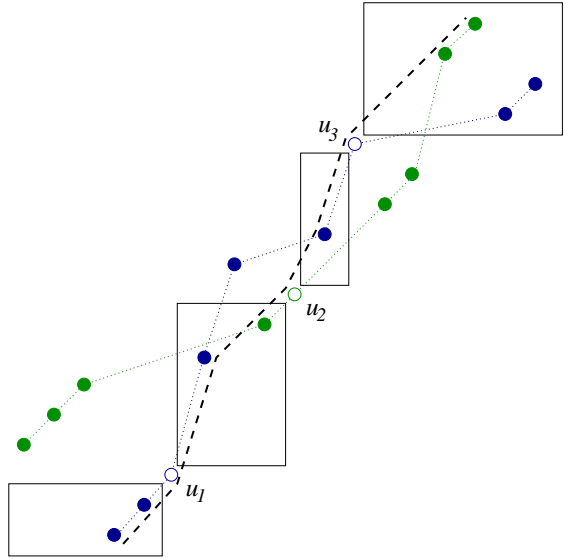}}}
        \caption{Example application of Lemma~\ref{lem:N}.}
        \label{fig:twoseqs}
  \end{figure}
\end{example}

\begin{prop}\label{prop:ext}
  Let $k\geq 0$ and $s^1,\ldots,s^k$ be disjoint (possibly empty)
  increasing subsequences of the separable permutation $\sigma$.  Then
  there exists an increasing subsequence $u^{k+1}$, disjoint from each
  $s^i$, such that $|u^{k+1}| \geq \lambda_{k+1}$.
\end{prop}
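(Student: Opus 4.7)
The plan is to combine Greene's Theorem (Theorem~\ref{thm:greene}) with iterated applications of Lemma~\ref{lem:N}. First, I would use Greene's Theorem to pick disjoint increasing subsequences $v^1, v^2, \ldots, v^{k+1}$ of $\sigma$ whose total length equals $\lambda_1 + \cdots + \lambda_{k+1}$. The goal is to massage this family, keeping its total size fixed, until the final sequence $v^{k+1}$ is disjoint from each $s^i$; then $u^{k+1} \defeq v^{k+1}$ finishes the proof, since Greene applied to the disjoint increasing family $v^1, \ldots, v^k$ gives $|v^1|+\cdots+|v^k| \le \lambda_1+\cdots+\lambda_k$, whence $|v^{k+1}| \ge \lambda_{k+1}$.

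To perform the massaging, I would process the sequences $s^i$ in the order $i = 1, 2, \ldots, k$. To process $s^i$, I first apply Lemma~\ref{lem:N} to the pair $(v^i, v^{i+1})$ with $u = s^i$: this produces $\alpha, \beta$ with $\alpha \cup \beta = v^i \cup v^{i+1}$ and $\alpha \cap s^i = \emptyset$, and I relabel $\alpha$ as the new $v^{i+1}$ (now disjoint from $s^i$) while $\beta$ becomes the new $v^i$ (absorbing any offending elements of $s^i$). I then iterate this move on $(v^i, v^{i+2})$, then on $(v^i, v^{i+3})$, and so on up through $(v^i, v^{k+1})$; after this sweep, each of $v^{i+1}, \ldots, v^{k+1}$ is disjoint from $s^i$, with the excess pushed down into $v^i$.

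The key invariant I would track is that after processing $s^1, \ldots, s^i$, each of $v^{i+1}, \ldots, v^{k+1}$ is disjoint from every $s^j$ with $j \le i$. This survives the next round because processing $s^{i+1}$ only modifies $v^{i+1}, \ldots, v^{k+1}$, and Lemma~\ref{lem:N} preserves the union of each pair it touches; in particular $v^{i+1} \cup \cdots \cup v^{k+1}$ is unchanged as a set and remains disjoint from $s^1, \ldots, s^i$. After $k$ rounds, $v^{k+1}$ is disjoint from every $s^j$, and the first paragraph's size estimate completes the argument. The only real obstacle I anticipate is the bookkeeping needed to confirm that at every step the two inputs to Lemma~\ref{lem:N} really are disjoint and increasing despite the earlier exchanges; since Lemma~\ref{lem:N} guarantees both properties for its outputs $\alpha$ and $\beta$, an inductive check within each sweep should suffice.
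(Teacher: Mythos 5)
Your proposal is correct and follows essentially the same route as the paper: start from a maximum-total-length family of $k+1$ disjoint increasing subsequences guaranteed by Greene's Theorem, repeatedly apply Lemma~\ref{lem:N} to push the elements meeting each $s^i$ into the $i$-th member of the family so that $v^{k+1}$ ends up disjoint from all the $s^i$, and then use Greene's Theorem again on $v^1,\ldots,v^k$ to conclude $|v^{k+1}|\geq\lambda_{k+1}$. The only difference is cosmetic bookkeeping (your explicit sweep versus the paper's induction on the index $m(V)$).
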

\begin{proof}
  Let $V = \{v^1,\ldots,v^{k+1}\}$ be any collection of $k+1$
  disjoint, increasing subsequences of $\sigma$ of maximum total
  length.  Let $m=m(V)$ be the maximum index such that $v^j$ is
  disjoint from $s^i$ for all $1\leq i< m \leq j$.  We prove by
  induction that $m$ can be taken to be $k+1$.
  
  So, assume $1 \leq m < k+1$.  At least one $v^j$ with $j > m$ must
  intersect $s^m$, otherwise $m$ would be larger than it is.  But
  repeated application of Lemma~\ref{lem:N} to the elements of
  $\{v^m,\ldots,v^{k+1}\}$ produces a new set $V' =
  \{v^1,\ldots,v^{m-1},\tilde{v}^{m},\ldots,\tilde{v}^{k+1}\}$ in
  which $j=m$ is the only value for which $\tilde{v}^j$ intersects
  $s^m$.  Hence $m(V') \geq m(V)+1$, completing the induction step.

  By the preceding paragraph, we may assume that $v^{k+1}$ is disjoint
  from all of the $s^i$.  Since the elements of $V$ are of maximum
  total length, $|v^1| + \cdots + |v^{k+1}| = \lambda_1 + \cdots +
  \lambda_{k+1}$.  If $|v^{k+1}|$ were less than $\lambda_{k+1}$, then
  $v^1,\ldots,v^k$ would have total length greater than $\lambda_1 +
  \cdots + \lambda_k$.  This is impossible.  Hence $|v^{k+1}| \geq
  \lambda_{k+1}$ as desired.
\end{proof}
\begin{example}
  Consider $x=10652438ba97$ (where we use $a$ for $10$ and $b$ for
  $11$).  The shape of $x$ is $(5,3,2,2)$.  Suppose we have
  $u^1=0248b$, $u^2=167$ and $u^3=5a$ and wish to find a disjoint
  increasing subsequence $u^4$ of length $2$.  We could, of course,
  simply use the remaining two elements, $3$ and $9$.  However, in
  order to illustrate the proofs of Proposition~\ref{prop:ext} and
  Theorem~\ref{prop:sep}, we show how to generate this sequence from an
  arbitrarily chosen $4$-tuple of disjoint increasing subsequences of
  maximum total length: $V = \{68b,049,237,15a\}$.

  So, set $k=3$ and $s^j=u^j$ for $j\in\{1,2,3\}$.  Consider the
  argument of Proposition~\ref{prop:ext}.  $m=m(V)=1$.  Let $u=u^1$,
  $w=68b$ and $w'=049$.  Lemma~\ref{lem:N} yields $\alpha = 69$ and
  $\beta=048b$.  Applying the lemma again with $w=048b$ and $w'=237$
  yields $\alpha=37$ and $\beta=0248b$.  This produces the new
  $4$-tuple $V'=\{0248b,69,37,15a\}$ with $m(V')=2$.

  Now set $u=u^2$.  Once again, an application of the lemma with
  $w=69$ and $w'=15a$ yields $\alpha=59$ and $\beta=16a$, while a
  following application to $w=16a$ and $w'=37$ yields $\alpha=3a$ and
  $\beta=167$.  This produces the new $4$-tuple
  $V''=\{0248b,167,59,3a\}$ with $m(V'') = 3$.  

  A final application of the lemma with $u=u^3=5a$, $w=59$ and $w'=3a$ yields
  the sought for $u^4=39$.
\end{example}

\begin{proof}[Proof of Theorem~\ref{thm:cont}]
  Let $\shape(w) = \mu = (\mu_1,\mu_2,\ldots)$.  Let $\sigma'$ be any
  subsequence of $w$ in the same relative order as the elements of
  $\sigma$; i.e., $w$ contains $\sigma$ at the positions of $\sigma'$.
  By Greene's Theorem applied to $w$, for any $k \geq 1$ there exist
  $k$ disjoint increasing subsequences $w^1,\ldots,w^k$ with
  $|w^1|+\cdots+|w^k| = \mu_1+\cdots+\mu_k$.  The intersection
  $\sigma' \cap w^i$ induces a subsequence of $\sigma$ we denote by
  $s^i$.  These $s^i$ are then $k$ disjoint increasing subsequences of
  $\sigma$.  By Proposition~\ref{prop:ext}, there is an increasing
  subsequence $u$ of $\sigma$, disjoint from the $s^i$s, with length
  at least $\lambda_{k+1}$.  The mapping $\sigma \mapsto \sigma'$
  induces a corresponding map of $u$ to a subsequence $u'$ of $w$.  It
  follows then that $u'$ is disjoint from each $w^i$ as well.  Then
  $w^1,\ldots,w^k,u'$ are $k+1$ disjoint increasing subsequences in
  $w$.  By Greene's Theorem,
  \begin{equation*}
    |w^1| + \cdots + |w^k| + |u'| \leq \mu_1 + \cdots + \mu_k + \mu_{k+1}.
  \end{equation*}
  Hence $|u'| \leq \mu_{k+1}$.  We also know by construction that
  $\lambda_{k+1}\leq |u| = |u'|$.  Combining these equalities and
  running over all $k$ yields $\lambda \subseteq \mu$ as desired.
\end{proof}

\subsection{Relationship to Greene's Theorem}
\label{sec:gt}

Greene's Theorem only tells us about the maximum \emph{sum} of lengths
of disjoint increasing sequences.  It is \emph{not} generally true
that one can find $d$ disjoint increasing subsequences
$u^1,u^2,\ldots,u^d$ of $w$ with $u^i$ of length $\lambda_i$ for each
$i$.  In other words, the shape of a word does not tell you the
lengths of the subsequences in a set of $d$ disjoint increasing
subsequences of maximum total length; it just tells you the maximum
total length.

\begin{example}\label{ex:nonsep}
  Consider the permutation $w = 236145$ of shape $(4,2)$.  The
  only increasing subsequence of length four is $2345$.  However, the
  remaining two entries appear in decreasing order.  Greene's Theorem
  tells us that we should be able to find two disjoint increasing
  subsequences of total length $6$.  Indeed, $236$ and $145$ work.
\end{example}

Nonetheless, such a collection of subsequences $\{u^i\}$ does exist
when $\sigma$ is a separable permutation.

\begin{prop}\label{prop:sep}
  Let $\sigma$ be a separable permutation of shape $\lambda$.  For any
  $d\geq 1$, there exist $d$ disjoint, increasing subsequences
  $u^1,\ldots,u^d$ such that the length of each $u^i$ is given by
  $\lambda_i$.
\end{prop}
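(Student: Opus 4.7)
The proof should proceed by induction on $d$, with Proposition~\ref{prop:ext} providing the existence of each new subsequence and Greene's Theorem (Theorem~\ref{thm:greene}) pinning down its length from above.

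For the base case $d=1$, I would simply take $u^1$ to be any longest increasing subsequence of $\sigma$; by Greene's Theorem its length equals $\lambda_1$. (Alternatively, apply Proposition~\ref{prop:ext} with $k=0$ to produce $u^1$ with $|u^1| \geq \lambda_1$, and note $|u^1| \leq \lambda_1$ trivially.)

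For the inductive step, suppose disjoint increasing subsequences $u^1,\ldots,u^{d-1}$ have been constructed with $|u^i| = \lambda_i$ for each $i < d$. Apply Proposition~\ref{prop:ext} with $k = d-1$ and $s^i = u^i$ to produce an increasing subsequence $u^d$, disjoint from each $u^i$, with $|u^d| \geq \lambda_d$. Then $u^1,\ldots,u^d$ are $d$ disjoint increasing subsequences of $\sigma$, so Greene's Theorem gives
\begin{equation*}
  |u^1| + \cdots + |u^{d-1}| + |u^d| \leq \lambda_1 + \cdots + \lambda_d.
\end{equation*}
Since $|u^i| = \lambda_i$ for $i < d$, this forces $|u^d| \leq \lambda_d$, and combining with the lower bound yields $|u^d| = \lambda_d$, closing the induction.

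There is no real obstacle: all the heavy lifting—the separability hypothesis, the exchange argument on disjoint chains in the inversion poset—has already been packaged into Proposition~\ref{prop:ext}, and Greene's Theorem converts its lower bound $|u^d| \geq \lambda_d$ into the required equality. The only thing to verify is that the inductive construction is internally consistent, which follows because the subsequences produced by Proposition~\ref{prop:ext} are built to be disjoint from all previously chosen ones.
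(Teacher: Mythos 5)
Your proposal is correct and follows essentially the same route as the paper, which also constructs the $u^i$ by $d$ successive applications of Proposition~\ref{prop:ext}. The only difference is that you explicitly supply the Greene's-Theorem upper bound forcing $|u^d| \leq \lambda_d$, a step the paper leaves implicit.
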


Theorem~\ref{thm:cont} and Proposition~\ref{prop:sep} are
superficially similar.  We have already shown how
Theorem~\ref{thm:cont} follows from Proposition~\ref{prop:ext} (and
Greene's Theorem).  Proposition~\ref{prop:sep} follows even more
immediately.

\begin{proof}[Proof of Proposition~\ref{prop:sep}]
  We can construct such a sequence via $d$ applications of
  Proposition~\ref{prop:ext}.  In particular, given the
  $u^1,\ldots,u^i$ for some $0\leq i< d$, produce $u^{i+1}$ by
  applying the proposition with $k=i$ and $s^j = u^j$ for $1\leq j\leq
  k$.  
\end{proof}

As pointed out to us by a referee to an earlier version of this paper,
Proposition~\ref{prop:sep} has a very simple proof relying on the
recursive definition of a separable permutation as one that can be
built up by direct and skew sums (see~\cite{seppaper}).  However, we
have been unable to follow a correspondingly direct proof of
Theorem~\ref{thm:cont}.

\section{Supersequences}
\label{sec:super}

Let $B \subseteq S_n$ be a set of permutations.  A word $w$ is a
\emph{supersequence of $B$} if, for all $\sigma\in B$, $\sigma$ is a
subsequence of $w$.  Note that for $w$ to be a supersequence of
$\{\sigma\}$, the actual entries of $\sigma$ must occur (in the same
order) in $w$; this is in contrast to pattern containment in which we
need only find elements of $w$ in the same relative order.

\begin{example}
  The word $w = 2214312$ is a supersequence of $132$ but not of $321$.
  In fact, $w$ is a supersequence of the set $B = \{132,312,213\}$.
\end{example}

Let $\scs{n}(B)$ denote the minimum length of a supersequence of the
set $B$.  An upper bound of $\scs{n}(S_n) \leq n^2-2n+4$ has been
proven by a number of different researchers in various contexts and
generalities.  See in
particular~\cite{adleman,galbiati,Koutas,mohanty,newey,savage}.
Recently, an upper bound of $n^2-2n+3$ was proven constructively for
$n \geq 10$ by Z{\u a}linescu~\cite{zalinescu}.
Kleitman and Kwiatkowski~\cite{Kleitman} have shown that $\scs{n}(S_n)
\geq n^2 - Cn^{7/4+\varepsilon}$ where $\varepsilon > 0$ and $C$
depends on $\varepsilon$.

For certain sets $B$, we can construct a lower bound for
$\scssub{n}{B}$ by considering the union of $\shape(\sigma)$ as
$\sigma$ runs over the elements of $B$.

\begin{lemma}
  If $T$ is the superstandard tableau of shape $\lambda$, then
  $\rw(T)$ is a $2413$,$3142$-avoiding permutation (i.e., is separable).
\end{lemma}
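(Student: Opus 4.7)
The plan is to induct on the number $k$ of nonzero parts of $\lambda$, leveraging the classical characterization of separable permutations as the smallest class containing the singleton and closed under direct and skew sums (see \cite{seppaper}).

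Setting $m_i = \lambda_1 + \cdots + \lambda_i$, the $i$-th row of the superstandard tableau $T$ consists of the block of consecutive integers $\{m_{i-1}+1, \ldots, m_i\}$ written in increasing order. Reading bottom-to-top gives $\rw(T) = r_k\, r_{k-1}\, \cdots\, r_1$, where each $r_i$ is increasing. The crucial structural observation is that the prefix $r_k$ occupies the leftmost $\lambda_k$ positions of $\rw(T)$ and carries the $\lambda_k$ largest values, while the suffix $r_{k-1}\cdots r_1$ fills the remaining positions and exhausts the values $\{1, \ldots, m_{k-1}\}$. In other words, $\rw(T)$ is the skew sum of the identity permutation $1\,2\,\cdots\,\lambda_k$ with the reading word of the superstandard tableau of shape $(\lambda_1, \ldots, \lambda_{k-1})$ (after flattening).

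From here the induction is immediate. The base case $k \le 1$ produces either the empty permutation or $1\,2\,\cdots\,\lambda_1$, both of which are trivially separable. For the inductive step, the prefix is increasing (hence avoids both $2413$ and $3142$), the suffix is separable by the inductive hypothesis, and a skew sum of two separable permutations is separable. The only assertion that requires direct verification is this last closure property, which is a short case check: in any putative occurrence of $2413$ or $3142$ in a skew sum, if the four positions split non-trivially between the two blocks then every left-block value exceeds every right-block value, but each of $2413$ and $3142$ forces some value inequality (e.g.\ $2 < 4$ or $1 < 3$) between a left-block entry and a right-block entry that contradicts this. Thus any forbidden pattern must sit entirely inside one of the two blocks, contradicting their separability.

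No step presents a genuine obstacle; the heart of the argument is simply recognizing the skew-sum structure of $\rw(T)$, after which the induction runs cleanly. The only quasi-technical point is the closure of separables under skew sums, which is elementary and fits in a few lines.
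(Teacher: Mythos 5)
Your proof is correct, but it takes a different route from the paper's. Both arguments start from the same structural observation: reading the superstandard tableau from the bottom row up produces a concatenation of increasing blocks whose value ranges strictly decrease, i.e.\ an iterated skew sum of identity permutations. From there the paper finishes in one line by noting that this structure forces $\rw(T)$ to avoid the single pattern $213$ (any occurrence of $213$ would need its first and third entries in the same block, hence all three, contradicting that blocks are increasing), and since $2413$ and $3142$ each contain $213$, avoidance of both follows. You instead invoke the recursive characterization of separable permutations and prove closure of the class $\mathrm{Av}(2413,3142)$ under skew sums by a direct case check on how a forbidden pattern could straddle the two blocks; that check is carried out correctly (every nontrivial split of $2413$ or $3142$ forces a left-block value below a right-block value). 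The paper's argument is shorter and yields the stronger conclusion that $\rw(T)$ is $213$-avoiding (a layered-type permutation), while yours is self-contained with respect to the pattern-avoidance definition used in the paper only modulo the skew-sum closure lemma, which you do supply, and it makes the connection to the standard recursive definition of separability explicit. Both are complete proofs.
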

\begin{proof}
  In fact, $\rw(T)$ avoids the pattern $213$: For $i < j$, the entries
  in row $j$ are greater than, and precede, the entries in row $i$.
  Hence $\rw(T)$ avoids both $2413$ and $3142$.
\end{proof}

Fix $k > 0$ and $B = \{\sigma_1,\ldots,\sigma_k\}$ with each
$\sigma_i$ separable.  It follows then from Theorem~\ref{prop:sep} that
for any supersequence $w$ of $B$, $\shape(w) \supseteq \cup_i
\shape(\sigma_i)$.  Hence, if we choose the $\sigma_i$ so that the
Ferrers diagrams of shapes $\shape(\sigma_i)$ overlap as little as
possible, we force any supersequence $w$ to be relatively long.

\begin{example}
  Let $n = 9$ and $k = 5$.  Choose the permutations
  $B = \{\sigma_1,\ldots,\sigma_5\}$ as
  \begin{align*}
    \sigma_1 &= 123456789, \quad \shape(\sigma_1) = (9),\\
    \sigma_2 &= 678912345, \quad \shape(\sigma_2) = (5,4),\\
    \sigma_3 &= 789456123, \quad \shape(\sigma_3) = (3,3,3),\\
    \sigma_4 &= 978563412, \quad \shape(\sigma_4) = (2,2,2,2,1),\\
    \sigma_5 &= 987654321, \quad \shape(\sigma_5) = (1,1,1,1,1,1,1,1,1).
  \end{align*}
  The union of the corresponding Ferrers diagrams is $\Yboxdim6pt
  \yng(9,4,3,2,1,1,1,1,1)$; we see that
  $|\cup_{i=1}^5 \shape(\sigma_i)| = 23$.  A computer search
  provides the length-$23$ supersequence
  $$6\,9\,7\,8\,7\,5\,9\,6\,5\,4\,3\,1\,2\,3\,4\,5\,6\,7\,8\,9\,1\,2\,3,$$
  thereby showing that this bound is optimal.
\end{example}

Let $\mu(n)$ be the Ferrers diagram obtained by taking the union of
all Ferrers diagrams of size $n$.

\begin{prop}
  Let $\tau(i)$ denote the number of divisors of $i$.  Then
  $|\mu(n)| = \sum_{i=1}^n \tau(i)$ and the number of corners (i.e.,
  distinct row lengths of $\mu(n)$) is given by $\lfloor
  \sqrt{4n+1}\rfloor-1$.
\end{prop}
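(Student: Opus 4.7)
My plan is to first determine which cells lie in $\mu(n)$ by showing that $(i,j)\in \mu(n)$ if and only if $ij\leq n$. The forward direction follows from $\lambda_1\geq\lambda_2\geq\cdots\geq\lambda_i\geq j$ forcing $n\geq ij$; the converse follows from exhibiting the partition $\lambda=(n-(i-1)j,\,j,\,j,\,\ldots,\,j)$, with $i-1$ trailing copies of $j$, as a witness. Consequently row $i$ of $\mu(n)$ has length $\lfloor n/i\rfloor$, and swapping the order of summation yields
\[
|\mu(n)| \;=\; \sum_{i=1}^{n}\lfloor n/i\rfloor \;=\; \sum_{i=1}^{n}\#\{j\leq n : i\mid j\} \;=\; \sum_{j=1}^{n}\#\{i : i\mid j\} \;=\; \sum_{j=1}^{n}\tau(j).
\]

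For the corner count I will count the distinct positive values of $\lfloor n/i\rfloor$; call this set $D$. Setting $s=\lfloor\sqrt{n}\rfloor$, I split at $i=s$. For $1\leq i<i'\leq s$, $n/i - n/i' \geq n/(ii') \geq n/s^2 \geq 1$, which together with $n/i>n/i'$ forces $\lfloor n/i\rfloor > \lfloor n/i'\rfloor$; hence $\lfloor n/1\rfloor,\ldots,\lfloor n/s\rfloor$ are $s$ distinct values, each at least $s$. On the other hand, for $i>s$ one has $\lfloor n/i\rfloor\leq\lfloor n/(s+1)\rfloor\leq s$ (using $n<(s+1)^2$), so these values lie in $\{1,\ldots,s\}$. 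Every value in $\{1,\ldots,s\}$ is actually attained: for $k\leq s-1$ the interval $(n/(k+1),\,n/k]$ has length $n/(k(k+1))\geq n/((s-1)s) > 1$ and so contains an integer, while $k=s$ is realized by $i=s$ when $n<s(s+1)$ and by $i=s+1$ when $n\geq s(s+1)$.

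The main remaining obstacle is a two-case verification matching the formula $\lfloor\sqrt{4n+1}\rfloor - 1$. In Case~(i), $s^2\leq n<s(s+1)$: here $\lfloor n/s\rfloor=s$, so the ``top'' set $\{\lfloor n/i\rfloor : 1\leq i\leq s\}$ and the ``bottom'' set $\{1,\ldots,s\}$ overlap only at $s$, giving $|D|=2s-1$. In Case~(ii), $s(s+1)\leq n<(s+1)^2$: here $\lfloor n/s\rfloor\geq s+1$, so the two sets are disjoint, giving $|D|=2s$. A direct check shows $\lfloor\sqrt{4n+1}\rfloor = 2s$ in Case~(i) (since $4n+1\in[4s^2,\,(2s+1)^2)$) and $\lfloor\sqrt{4n+1}\rfloor = 2s+1$ in Case~(ii) (since $4n+1\in[(2s+1)^2,\,(2s+2)^2)$), so $\lfloor\sqrt{4n+1}\rfloor - 1$ agrees with $|D|$ in both cases, completing the proof.
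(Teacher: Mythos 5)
Your proof is correct. It differs from the paper's mainly in the first half: you characterize $\mu(n)$ explicitly as $\{(i,j): ij\leq n\}$ (so row $i$ has length $\lfloor n/i\rfloor$) and then obtain $|\mu(n)|=\sum_i\lfloor n/i\rfloor=\sum_j\tau(j)$ by the standard interchange of summation, whereas the paper proceeds by induction on $n$, observing that passing from $\mu(n-1)$ to $\mu(n)$ adds exactly the cells $(d,n/d)$ for divisors $d$ of $n$. Your route has the advantage of giving a closed-form description of the diagram that then feeds directly into the corner count; the paper's induction buys the pleasant side remark that the chain $\mu(1)\subset\cdots\subset\mu(n)$ forms a semistandard tableau with $i$ appearing $\tau(i)$ times. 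For the corner count the two arguments are close in spirit: both split at $k=s=\lfloor\sqrt{n}\rfloor$ and both reduce to the same two-case comparison of $2s-1$ versus $2s$ against $\lfloor\sqrt{4n+1}\rfloor-1$ according to whether $n<s(s+1)$. The paper shows rows $1,\ldots,k$ each end in a corner and then invokes the same argument for the first $k$ columns (implicitly using the self-conjugacy of $\mu(n)$), with $(k,k)$ possibly counted twice; you instead show directly that the values $\lfloor n/i\rfloor$ for $i\leq s$ are distinct and that every value in $\{1,\ldots,s\}$ is attained for $i>s$ via an interval-length argument. Both are complete; yours avoids the appeal to row/column symmetry at the cost of the extra surjectivity check.
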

\begin{proof}
  For each divisor $d$ of $n$, the shape
  $(\overbrace{n/d,\ldots,n/d}^{d\text{ times}})$ will be contained in
  $\mu(n)$.  Furthermore, each cell $(d,n/d)$ will be a corner that is
  not part of $\mu(n-1)$.  The result $|\mu(n)| = \sum_{i=1}^n
  \tau(i)$ then follows by induction.  (In fact, the nested sequence
  of Ferrers diagrams $\mu(1) \subset \mu(2)\subset \cdots \subset
  \mu(n)$ can be thought of as a semistandard Young tableau of shape
  $\mu(n)$ in which the label $i$ occurs $\tau(i)$ times.)

  We now prove that the number of corners of $|\mu(n)|$ is given by
  $\lfloor \sqrt{4n+1}\rfloor -1$.  Let $k$ be the largest integer for
  which a $k\times k$ square is contained in the diagram of $\mu(n)$,
  that is, $k$ is the number of cells on the main diagonal in
  $\mu(n)$. We have that $k^2\leq n$.  The cell $(k,k)$ is a corner
  of $\mu(n)$ if and only if $k(k+1)>n$,
  i.e. $(\underbrace{k,\ldots,k}_{k+1})$ is not contained in any
  diagram of size $n$. We claim that the rows $1,\ldots,k$ of $\mu(n)$
  will each contain a corner of $\mu(n)$.  This is trivially true for
  the first row.  For $1<i\leq k$, row $i$ ends at $(i,\lfloor
  n/i\rfloor)$ while the row above ends at $(i-1,\lfloor
  n/(i-1)\rfloor)$.  Since $k\leq \sqrt{n}$, $\lfloor n/i\rfloor <
  \lfloor n/(i-1)\rfloor$ and we have a corner in the $i$-th row as
  claimed.
  The same argument holds for the first $k$ columns, so the total
  number of corners is $2k-1$ if $(k,k)$ is a corner and $2k$
  otherwise. We have that $(2k+1)^2 \geq 4n+1 \geq 4k^2+1$, with the
  first inequality strict if and only if $(k,k)$ is a corner. Hence
  the number of corners is indeed $\lfloor\sqrt{4n+1}\rfloor -1$.
\end{proof}

It is a standard fact that $\sum_{i=1}^n \tau(i) \sim n(\ln n + 2\gamma +
\cdots)$ (see, e.g.,~\cite[Theorem 3.3]{apostol}).  Hence, for any $n$
we can find $\lfloor \sqrt{4n+1}\rfloor-1$ permutations whose
supersequence is of length at least $n(\ln n + 2\gamma+\cdots)$.
Compare this with $n!$ permutations having a supersequence of length
$O(n^2)$.

\section{Acknowledgments}
The authors would like to thank Dan Archdeacon, Jeff Buzas and Jeff
Dinitz for useful conversations leading to this paper.


\providecommand{\bysame}{\leavevmode\hbox to3em{\hrulefill}\thinspace}
\providecommand{\MR}{\relax\ifhmode\unskip\space\fi MR }
\providecommand{\MRhref}[2]{%
  \href{http://www.ams.org/mathscinet-getitem?mr=#1}{#2}
}
\providecommand{\href}[2]{#2}

\end{document}